\newtheorem{theorem}{Theorem}[section]
\newtheorem{lemma}[theorem]{Lemma}
\theoremstyle{definition}
\newtheorem{definition}[theorem]{Definition}
\newtheorem{claim}[theorem]{Claim}
\newtheorem{remark}[theorem]{Remark}
\newcommand{\mC}{{\mathbb C}}
\newcommand{\mE}{{\mathbb E}}
\newcommand{\mF}{\mathbb F}
\newcommand{\mQ}{\mathbb Q}
\newcommand{\mZ}{{\mathbb Z}}
\newcommand{\mcO}{\mathcal O}
\newcommand{\ti}{\tilde}
\newcommand{\emp}{\emptyset}
\newenvironment{pf}{\proof[\proofname]}{\endproof}
\begin{document}
\title{On  ranks of polynomials.}
\begin{abstract}
Let $V$ be a vector space over a field $k, P:V\to k, d\geq 3$. 
We show the existence of a function $C(r,d)$ 
such that $rank (P)\leq C(r,d)$ 
 for any field $k,char (k)>d$, a  finite-dimensional  $k$-vector space $V$
and a polynomial $P:V\to k$ of degree $d$ such that $rank(\partial P/\partial  t)\leq r$ for all $t\in V-0$.
 Our proof of this theorem is based on the application of results on Gowers norms for finite fields $k$.
We don't know a direct proof even in the case when $k=\mC$.

\end{abstract}

\author{David Kazhdan}
\address{Einstein Institute of Mathematics,
Edmond J. Safra Campus, Givaat Ram 
The Hebrew University of Jerusalem,
Jerusalem, 91904, Israel}
\email{david.kazhdan@mail.huji.ac.il}

\author{Tamar Ziegler}
\address{Einstein Institute of Mathematics,
Edmond J. Safra Campus, Givaat Ram 
The Hebrew University of Jerusalem
Jerusalem, 91904, Israel}
\email{tamarz@math.huji.ac.il}

\dedicatory{Dedicated to A. Kirillov  on the occasion of his 80th birthday}

\thanks{The second author is supported by ERC grant ErgComNum 682150}

\maketitle

\section{introduction}
We fix $d\geq 0$ and restrict our attention to field $k$ such that $d!\neq 0$ (in other words when  either $k$ is a field of characteristic zero or $char(k)>d$). 
For any $k$-valued function $f$ on $V$ and $t\in V-0$ we define  $\Delta_tf(x) := f(x+t)-f(x)$.
For any 
$t\in V$ we denote by $P\to \partial P/ \partial t$ the differentiation of the ring of polynomial function on $V$ such that $ \partial P/ \partial t= P(t)$ if $P:V\to k$ is a linear function.%
\begin{definition} [Algebraic rank filtration] Let $k$ be a field. \\
a) For a homogeneous  polynomial  $P$ on a
 finite-dimensional  $k$-vector space $V$,   of degree $d\geq 2$ we
define the {\it rank} $r(P)$ of $P$ as  the minimal number $r$ such that it is possible to write $P$ in the form
$$P = \sum ^ r_{i=1} L_iR_i,$$ 
where $L_i, R_i\in \bar k[V^\vee ]$ are homogeneous polynomials of positive degrees. \\
b) For any   polynomial  $P$ on $V,t\in V-0$ we define 
$P_t:=\partial P/\partial  t$. \\
c) If $P:V\to k$ is any polynomial of degree $d$  we define 
$r(P)=r(P_d)$ where $P_d:V\to k$ the homogeneous part of $P$ of the degree $d$.
\end{definition}
\begin{theorem}[Main]\label{main}
Let $d, r >0$. There exists a function $C(r,d)$ such that $r(P)\leq C(r,d)$ for any polynomial   $P$ of degree $d$ on  a
 finite-dimensional  $k$-vector space $V$ such that $r( \Delta_tP)\leq r$ for all $t\in V$.

\end{theorem}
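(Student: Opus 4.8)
The plan is to translate the problem into the language of Gowers uniformity norms over finite fields, establish there a bound that is uniform in the field size and in $\dim V$, and then descend to an arbitrary field by a Lefschetz/model-theoretic argument. First I would reduce to the homogeneous case: since $r(P)=r(P_d)$ by definition, and since the leading ($\deg d-1$) part of $\Delta_tP$ is exactly $\partial P_d/\partial t$, both the hypothesis $r(\Delta_tP)\le r$ and the conclusion depend only on the top form $P_d$. Hence I may assume $P$ is homogeneous of degree $d$ and that $r(\partial P/\partial t)\le r$ for every $t\neq 0$, and I seek a bound on $r(P)$.

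The core is an estimate over $k=\mF_q$ with $\mathrm{char}(k)=p>d$. Fix a nontrivial additive character $\psi$ and consider $\|\psi(P)\|_{U^d}$. I would use two standard facts. \emph{(Easy direction.)} If a homogeneous form $Q$ of degree $e$ has $r(Q)\le\rho$, writing $Q=\sum_{i=1}^{\rho}L_iR_i$ and applying the Leibniz rule to the $e$-fold iterated difference shows that the leading symmetric $e$-linear form of $Q$ has partition rank $\le 2^e\rho$; since the bias of a multilinear form is bounded below by $q^{-(\text{partition rank})}$ and $\|\psi(Q)\|_{U^e}^{2^e}$ equals that bias, one gets $\|\psi(Q)\|_{U^e}^{2^e}\ge q^{-2^e\rho}$. (Here one uses that the base-field rank of a form is controlled by a function of $d$ times its rank over $\bar k$, so that a usable $\mF_q$-representation exists.) \emph{(Recursion.)} The identity $\|\psi(P)\|_{U^d}^{2^d}=\mE_t\,\|\psi(\Delta_tP)\|_{U^{d-1}}^{2^{d-1}}$ holds, and because the $U^{d-1}$ norm of a degree $d-1$ function only sees its leading form, $\|\psi(\Delta_tP)\|_{U^{d-1}}=\|\psi(\partial P/\partial t)\|_{U^{d-1}}$. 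Each $\partial P/\partial t$ has rank $\le r$, so the easy direction gives a lower bound for every $t$, and averaging yields
\[
\|\psi(P)\|_{U^d}^{2^d}=\mE_{t}\,\|\psi(\Delta_tP)\|_{U^{d-1}}^{2^{d-1}}\ \ge\ \mE_{t}\,q^{-2^{d-1}r}=q^{-2^{d-1}r}.
\]
Thus $P$ has analytic rank at most $2^{d-1}r$, a bound independent of $q$ and of $\dim V$.

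The deep input — the referenced results on Gowers norms over finite fields — is the converse: for $p>d$ there is a function $F(s,d)$, independent of $q$ and of $\dim V$, such that $\|\psi(P)\|_{U^d}^{2^d}\ge q^{-s}$ forces $r(P)\le F(s,d)$. Granting this, the finite-field case follows with $C(r,d)=F(2^{d-1}r,d)$. I expect this converse to be the main obstacle: it is precisely the statement that low analytic rank implies low algebraic (partition) rank, for which no purely algebraic proof is known and which is the reason the argument must pass through finite-field harmonic analysis — exactly the point flagged in the abstract.

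Finally I would descend to an arbitrary $k$ with $\mathrm{char}\,k>d$ or $0$. For fixed $n=\dim V$ and the value $C=C(r,d)$ above, the assertion that no degree-$d$ form in $n$ variables satisfies the hypothesis while having $r(P)>C$ is a first-order sentence over algebraically closed fields, since ``$r(Q)\le c$'' amounts to the existence of a representation $Q=\sum_{i=1}^{c}L_iR_i$ with factors of bounded degree, a constructible condition on the coefficients over $\bar k$. Its truth over every $\ov{\mF_p}$ with $p>d$ follows from the finite-field case: a putative counterexample and its rank witnesses are defined over a finite subextension $\mF_{q}$, over which the bound applies, and $r_{\bar k}(P)\le r_k(P)$ makes the conclusion descend while the hypothesis ascends. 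By completeness of $\mathrm{ACF}_p$ this transfers to all algebraically closed fields of characteristic $p$, and then to $\mC$ and characteristic $0$ by the standard reduction-mod-$p$ argument (a counterexample over $\mC$ has coefficients in a finitely generated ring, reducible modulo infinitely many $p>d$). Since $r(P)$ is computed over $\bar k$ and the hypothesis over $k$ implies it over $\bar k$, this gives the desired uniform bound $C(r,d)$ over the original field $k$.
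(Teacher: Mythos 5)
Your proposal is correct (at the same level of rigor as the paper itself) and follows essentially the same route: the finite-field case via the identity $\|\psi(P)\|_{U_d}^{2^d}=\mE_t\|\psi(\Delta_tP)\|_{U_{d-1}}^{2^{d-1}}$, a lower bound on each term coming from the rank hypothesis on the derivatives, and the cited Bhowmick--Lovett ``bias implies bounded rank'' theorem, followed by the same Chevalley-constructibility and transfer argument across algebraically closed fields (with reduction mod $p$ for characteristic zero), exactly as in Theorem~\ref{f} and Claim~\ref{claim}. The only substantive difference is that you state the easy direction as an analytic-rank bound, bias $\ge q^{-2^{d-1}r}$, whereas the paper's Claim~\ref{g} asserts a $q$-independent constant lower bound $s(r,d)$; your formulation is in fact the correct one (a rank-one bilinear form already has bias $1/q$, so no such constant can exist), and it is precisely the regime covered by \cite{BL}, so your version quietly repairs the statement of that intermediate lemma while leaving the overall structure of the argument unchanged.
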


\begin{remark}
For any $k$-valued function $f$ on $V$ and $t\in V-0$ we define 
 $\Delta_tf(x) := f(x+t)-f(x)$. If $f$ is a polynomial of degree $d$ then for generic 
$t\in V$ both $f_t$ and $\Delta _t(f)$ are polynomials of degree $d-1$ and the difference 
h $f_t-\Delta _t(f)$ is of degree $\leq d-2$. Therefore  $r(P_t)=r(\Delta_tP)$ 
 for any polynomial $P,t\in V$  and generic $t\in V$. 
In other words, the condition $r(P_t)\leq r, t\in V-0$ is equivalent to the condition $r(\Delta _t(P))\leq r, t\in V$. This is the 
\end{remark}
We grateful for Professor Jan Draisma who informed us about Theorem $1.8$ of \cite{DES} where a stronger result is proven for the case of cubic polynomial.
\section{The case of finite fields}
In this section we assume that $k$ is a finite field of characteristic $>d$.
\begin{definition}
\begin{enumerate}
\item
We denote by $e_p:\mF _p\to \mC ^\star$ the additive character 
$e_p(x):=\exp (\frac {2\pi ix}{p})$. For any finite field $\mF _q$, $q=p^l$ we define $\psi :=e_p(tr _{\mF _q/\mF _p}(x))$ where 
$tr _{\mF _q/\mF _p}:\mF _q\to \mF _p$ is the trace map. \\
\item
  For a function $f$ on a finite set $X$ we define
\[
\mE_{x \in X}f(x) = \frac{1}{|X|}\sum_{x \in X}f(x).
\]
We use $X \ll_L Y$ to denote the estimate 
$|X| \le C(L) |Y|$, where the constant $C$ depends only on $L$.  \\
\item Let $k=\mF _q$ be a finite field, $V$ be a finite dimensional $k$-vector space.  Given a function  $F: V \to k$ the  $m$-th {\em Gowers norm}  of $F$  is defined by
\[
\|\psi(F)\|_{U_m}^{2^m} = \mE_{v, v_1, \ldots, v_m \in V} \psi(\Delta_{v_m} \ldots  \Delta_{v_1} F(v)).
\]
These were introduced by Gowers in \cite{G}, and were shown to be norms for $m>1$.\\
\item For a homogeneous polynomial $P$ on $V$ of degree $d$ we define
\begin{equation}\label{multilinear}
\tilde P(x_1, \ldots, x_d) = \Delta_{x_d}\ldots \Delta_{x_1}P(x).
\end{equation}
This is a multilinear homogeneous form in $x_1, \ldots, x_d \in V$ such  that 
 $$P(x) = \frac{1}{d!}\tilde P(x,\ldots, x).$$  
\end{enumerate}
\end{definition}

\begin{claim}\label{g}There exists a function $s(r,d)$ such
 that for any
 finite field $k$, a finite-dimensional
 $k$-vector space $V$ and  a multilinear homogeneous polynomial 
 $P:V^d \to k$ of degree $d \ge 2$ and rank $\leq r$ we have 
$\mE_{ x \in V} \psi(P(x))\ge  s(r,d)$.
\end{claim}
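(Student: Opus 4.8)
The plan is to prove the lower bound $\mE_{x}\psi(P(x))\ge q^{-r}$ with $q=|k|$, which supplies the desired $s(r,d)$; I read the average as being over $x=(x_1,\dots,x_d)\in V^d$, so that for $P=\tilde g$ the quantity $\mE_x\psi(P(x))$ is exactly the nonnegative number $\|\psi(g)\|_{U_d}^{2^d}$ attached to the associated homogeneous polynomial $g$. First I would record the positivity fact that underlies everything: for any multilinear form $m$ on a product $W_1\times\cdots\times W_\ell$, the average $\mE_{w}\psi(m(w))$ is real and nonnegative, because integrating out the last (linear) variable gives $\mE_{w_\ell}\psi(m(\cdots,w_\ell))=\mathbf 1[\,m(\cdots,w_{\ell-1},\cdot)\equiv 0\,]$, and averaging a $\{0,1\}$-valued function stays nonnegative. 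In particular $\mE_x\psi(P(x))\ge 0$, so a positive lower bound is the right kind of statement.

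Using $r(P)\le r$ I would fix a decomposition $P=\sum_{i=1}^r A_iB_i$ into products of forms of positive degree. The heart of the argument is the model situation in which $\{x_1,\dots,x_d\}$ splits into two blocks $y,z$ so that every factor is \emph{multilinear} and $P=\sum_i A_i(y)B_i(z)=\langle A(y),B(z)\rangle$ with $A=(A_i),B=(B_i)$ valued in $k^r$. Averaging over $y$ first, $\mE_y\psi(\langle A(y),c\rangle)$ is the average of $\psi$ of the multilinear form $y\mapsto\sum_i c_iA_i(y)$, hence nonnegative, and it equals $1$ when $c=0$; therefore
\[
\mE_x\psi(P(x))=\mE_z\,\mE_y\,\psi(\langle A(y),B(z)\rangle)\ge\Pr_z\big[B_1(z)=\cdots=B_r(z)=0\big].
\]
Finally I would bound this common-zero probability below by the same positivity trick: writing $\mathbf 1[B_i(z)=0]=\mE_{s_i\in k}\psi(s_iB_i(z))$ gives $\Pr_z[\,\forall i\ B_i(z)=0\,]=\mE_{s\in k^r}\,\mE_z\,\psi\big(\sum_i s_iB_i(z)\big)$, an average of nonnegative multilinear biases whose $s=0$ term equals $1$, so the whole average is at least $q^{-r}$.

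The step I expect to be the main obstacle is passing from this clean model case to a general rank decomposition. Two features can break: the factors $A_i,B_i$ furnished by the definition need not be multilinear, and different products may split $x_1,\dots,x_d$ in incompatible ways, so there is no single two-block form $P=\langle A(y),B(z)\rangle$. If one tries to integrate out a variable occurring in some factors but not others, the terms that do not involve it contribute an uncontrolled phase $\psi(C)$, and the nonnegativity on which the whole computation rests is lost. Overcoming this requires either exploiting the multilinearity of $P$ to replace the given decomposition by a structured one with multilinear factors and a common two-block splitting (i.e.\ comparing the rank $r(P)$ with the partition rank of the underlying tensor, at the cost of replacing $r$ by some $f(r,d)$), or combining the $r$ rank-one pieces through subadditivity of the analytic rank. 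A secondary point to watch is uniformity: the estimate above yields $q^{-r}$, which depends on $k$; producing a constant $s(r,d)$ depending only on $r,d$ is a genuinely stronger assertion and, if that is what is intended, would need the field-independent comparison of partition and analytic rank rather than this elementary bound.
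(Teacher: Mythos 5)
The paper itself offers no proof of Claim \ref{g}: it is quoted from Lemma 2.2 of \cite{KZ}. So your attempt must be judged on its own, and its core is sound: positivity of averages of $\psi$ over multilinear forms, integrating out one block of variables to get an indicator, and the expansion $\mathbf{1}[B_i(z)=0]=\mE_{s_i\in k}\,\psi(s_iB_i(z))$ are exactly the standard argument (this is how Lovett proves that analytic rank is at most partition rank), and in the two-block model case they correctly give the bound $q^{-r}$. The first gap you flag is genuine but standard to close: since $P$ is multilinear it equals its own component of multidegree $(1,\dots,1)$, so in any decomposition $P=\sum_{i\le r}L_iR_i$ one may replace $L_iR_i$ by $\sum_{|S|=\deg L_i}(L_i)_S(R_i)_{S^c}$, where $(L_i)_S$ denotes the component of $L_i$ that is multilinear in the variables indexed by $S\subset\{1,\dots,d\}$; this exhibits $P$ with partition rank at most $r\binom{d}{\lfloor d/2\rfloor}$. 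Even then your single-split computation does not apply verbatim, because different terms use different partitions $S$; one needs in addition subadditivity of the analytic rank (a Cauchy--Schwarz argument), and one must also handle the fact that the paper defines rank via factors in $\bar k[V^\vee]$, while your character argument needs factors defined over $k$.

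The second issue you raise, the dependence on $q$, is not a ``secondary point'': it is decisive, but in the opposite direction from the one you suggest. No argument can produce a constant $s(r,d)$ uniform over all finite fields, because the claim in that strength is false: your own model case $P(x,y)=x_1y_1$ on $V^2$ has rank $1$ and $\mE\,\psi(P)=1/q$, which drops below any fixed positive constant once $q$ is large. The field-independent comparison of partition rank and analytic rank that you propose as a fix cannot help, since it goes the other way (it bounds partition rank in terms of analytic rank); the inequality $\mE\,\psi(P)\ge q^{-\mathrm{prank}(P)}$ is sharp. Thus the correct, and best possible, form of the claim is precisely the $q$-dependent one your argument is designed to give, $\mE\,\psi(P)\ge q^{-f(r,d)}$ with $f$ independent of the field, and that form is what the rest of the paper actually requires: the inverse theorem of \cite{BL} invoked in the proof of Theorem \ref{f} takes as hypothesis a bias or Gowers-norm lower bound on the scale $q^{-s}$ and returns a rank bound depending only on $s$ and $d$. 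So once the reduction above is carried out, your plan proves the claim in the only form in which it can be true, and that form suffices downstream; what it cannot do, and what nothing can do, is establish the field-independent bound as literally stated.
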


This Claim is Lemma 2.2 in \cite{KZ}.

\begin{lemma}\label{previous} There exists a function $b(r,d)$ such  that for any
 finite field $k$, a finite-dimensional
 $k$-vector space $V$ and  a polynomial $P:V\to k$ of degree $d$ 
and rank $\leq r$ we have  $ \|\psi (P)\|_{U_{d}}>b(r,d)$. 
\end{lemma}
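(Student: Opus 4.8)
The plan is to relate the $U_d$ Gowers norm of $\psi(P)$ directly to the exponential sum bound for the associated multilinear form, using Claim~\ref{g} as the engine. Recall from the definition that
\[
\|\psi(P)\|_{U_d}^{2^d} = \mE_{v, v_1, \ldots, v_d \in V} \psi(\Delta_{v_d} \cdots \Delta_{v_1} P(v)).
\]
Since $P$ has degree $d$, the iterated difference $\Delta_{v_d} \cdots \Delta_{v_1} P(v)$ kills all the lower-degree homogeneous parts, so the inner expression depends only on the top homogeneous part $P_d$ and in fact equals the multilinear form $\ti P(v_1, \ldots, v_d)$ defined in \eqref{multilinear}, independent of $v$. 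Thus the $v$-average is trivial and we get exactly
\[
\|\psi(P)\|_{U_d}^{2^d} = \mE_{v_1, \ldots, v_d \in V} \psi(\ti P(v_1, \ldots, v_d)).
\]

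First I would make this computation precise: I would verify that $\Delta_{v_d}\cdots\Delta_{v_1}P = \ti P(v_1,\ldots,v_d)$ as a constant function of $v$, using that each difference operator lowers degree by one and annihilates constants, and that differentiating a degree-$d$ polynomial $d$ times produces a multilinear form not depending on the base point $v$. This reduces the lemma to bounding the right-hand side from below. Next, I would observe that $\ti P$ is a multilinear homogeneous form on $V^d$; I need to connect its rank as a multilinear form to the rank $r(P) = r(P_d)$ of the original polynomial. The hypothesis is $r(P) \le r$, and I would argue that the rank of the multilinear form $\ti P$ is controlled by a function of $r$ and $d$ — essentially because a decomposition $P_d = \sum_{i=1}^r L_i R_i$ into products of lower-degree homogeneous pieces polarizes into a bounded-rank decomposition of $\ti P$, with the number of terms growing only by a factor depending on $d$ (from expanding the multilinear differences of each product).

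Having established that $\ti P$ has multilinear rank bounded by some $r'(r,d)$, I would apply Claim~\ref{g} to the multilinear form $\ti P : V^d \to k$ to conclude
\[
\mE_{v_1, \ldots, v_d \in V} \psi(\ti P(v_1, \ldots, v_d)) \ge s(r'(r,d), d),
\]
and then set $b(r,d) := s(r'(r,d), d)^{1/2^d}$, which gives $\|\psi(P)\|_{U_d} \ge b(r,d)$ as required. The quantity $b(r,d)$ is positive and depends only on $r$ and $d$, uniformly over the finite field $k$ and the dimension of $V$, which is exactly the content of the lemma.

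The main obstacle I expect is the rank bookkeeping step: controlling the multilinear rank of $\ti P$ by a function of $r(P)$ and $d$. The subtlety is that polarization of a product $L_i R_i$ of homogeneous polynomials of positive degrees, under the full iterated difference operator, expands into a sum of products of the multilinearizations of $L_i$ and $R_i$ evaluated on disjoint subsets of the variables $v_1,\ldots,v_d$; one must check that each resulting term is genuinely of the form required by the definition of multilinear rank (a product involving a nonconstant factor) and count them. I would handle this by noting that $\Delta_{v_d}\cdots\Delta_{v_1}(L_iR_i)$ expands via a Leibniz-type rule into a sum over partitions of the index set $\{1,\ldots,d\}$ into two blocks feeding $L_i$ and $R_i$ respectively, giving at most $2^d$ terms per summand and hence a multilinear rank at most $r \cdot 2^d$ for $\ti P$; only the partitions respecting the degrees of $L_i$ and $R_i$ survive, but the crude bound $r'(r,d) = r\,2^d$ suffices to invoke Claim~\ref{g}.
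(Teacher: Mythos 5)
Your proof is correct and takes essentially the same approach as the paper: both identify $\|\psi(P)\|_{U_d}^{2^d}$ with the bias $\mE_{v_1,\ldots,v_d}\psi(\ti P(v_1,\ldots,v_d))$ of the polarized multilinear form and then invoke Claim~\ref{g}. If anything, you are more careful than the paper, whose proof applies Claim~\ref{g} to $\ti P$ without remarking that one must first bound the rank of the multilinear form $\ti P$ in terms of $r(P)$; your Leibniz-expansion bound $r(\ti P)\le r\,2^d$ supplies exactly that implicit bookkeeping step.
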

\begin{proof} 
\[ \|\psi (P)\|_{U_{d}}^{2^d} = \mE_{h_1, \ldots h_d, x \in V} \psi (\Delta_{h_d} \ldots \Delta_{h_1}P)\]
 Since $P$ is of degree $d$  the function $G(h_1, \ldots, h_d)=\Delta_{h_d} \ldots \Delta_{h_1}P(x)$ is a constant equal to $\ti P (h_1,...,h_d)$.

Since $P$ is of rank $<r$, it follows from   \ref{g} that 
 \[
 \mE_{h_1, \ldots h_d} G(h_1, \ldots, h_d) \ge c(r,d)
 \]

\end{proof}

\begin{theorem}\label{f} There exists a function $C(r,d)$ such  that $r(P)\leq C(r,d)$ for any polynomial $P:V \to k$ of degree $d$ such that $r(\Delta _t(P))\leq r$  for all $t \in V$.

\end{theorem}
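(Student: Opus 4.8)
The plan is to work over finite fields first, so I need to prove Theorem~\ref{f}, from which the Main theorem over arbitrary fields should follow by a standard specialization/ultraproduct argument (reducing general fields to finite ones of large characteristic).

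Let me think about the structure. We have a polynomial $P$ of degree $d$ on $V$ over a finite field $k$, with the hypothesis that $r(\Delta_t P) \leq r$ for all $t \in V$. I want to bound $r(P)$.

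The key tools available are Claim~\ref{g} (low rank implies large exponential sum / bias) and Lemma~\ref{previous} (low rank implies large Gowers $U_d$ norm). The natural strategy is a converse-type argument: I want to show $P$ has low rank, which by the contrapositive means I should show that if $P$ had large rank, something would go wrong with the hypothesis on the derivatives. So the engine here must be the **inverse theorem for Gowers norms** over finite fields — the deep result (due to Bergelson–Tao–Ziegler / Tao–Ziegler in high characteristic) that says: if $\|\psi(P)\|_{U_d}$ is bounded below, then $P$ correlates with a polynomial of degree $\leq d-1$, or more precisely $P$ has bounded *analytic rank*, which in high characteristic is equivalent to bounded *algebraic rank* (this equivalence itself being a result of the authors, presumably from \cite{KZ}).

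Here is the approach I would take. I would proceed by induction on $d$. For each $t$, the derivative $\Delta_t P$ has degree $d-1$ and rank $\leq r$, so by Lemma~\ref{previous} its $U_{d-1}$ norm is bounded below by $b(r,d-1)$, uniformly in $t$. The heart of the matter is to convert this family of uniform lower bounds on the $U_{d-1}$ norms of the derivatives into a lower bound on the $U_d$ norm of $P$ itself. Informally,
\[
\|\psi(P)\|_{U_d}^{2^d} = \mE_{t}\, \|\psi(\Delta_t P)\|_{U_{d-1}}^{2^{d-1}},
\]
so averaging the per-direction lower bounds should give a lower bound on $\|\psi(P)\|_{U_d}$ that depends only on $r$ and $d$. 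Once I have $\|\psi(P)\|_{U_d} \geq c(r,d)$, I invoke the inverse Gowers theorem (valid for $\mathrm{char}(k) > d$) to conclude that the top-degree part $P_d$ has bounded analytic rank, and then the analytic-to-algebraic rank comparison to conclude $r(P) = r(P_d) \leq C(r,d)$, completing the finite-field case.

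The main obstacle, and the step I expect to require the most care, is precisely this conversion from derivatives to $P$: the identity above is only literally true when $P$ is homogeneous of degree exactly $d$, and I have to be careful that $\Delta_t P$ genuinely has degree $d-1$ for the relevant $t$ and that lower-degree contributions do not spoil the estimate. One also has to be honest about the direction of the inverse theorem: Lemma~\ref{previous} gives us the \emph{easy} direction (low rank $\Rightarrow$ large norm), but here we need to produce a large $U_d$ norm for $P$ and then feed it into the \emph{hard} direction (large norm $\Rightarrow$ low rank), so the inverse theorem over finite fields in characteristic $>d$ is really doing the work. Finally, to deduce the Main theorem for general $k$ from Theorem~\ref{f}, I would argue that rank is a geometric/constructible condition so that a uniform bound over all finite fields of characteristic $>d$ transfers, via a compactness or Lefschetz-principle argument, to algebraically closed fields of characteristic zero and hence to $\mathbb{C}$ and all fields with $d! \neq 0$.
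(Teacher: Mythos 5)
Your proposal follows essentially the same route as the paper's own proof: the paper computes $\|\psi(P)\|_{U_d}^{2^d}=\mE_t\|\psi(\Delta_tP)\|_{U_{d-1}}^{2^{d-1}}\geq b(r,d-1)^{2^{d-1}}>0$ by applying Lemma~\ref{previous} to each derivative, and then cites the bias-versus-rank result of Bhowmick--Lovett \cite{BL} (the field-size-independent, high-characteristic inverse-type statement playing exactly the role you assign to the inverse Gowers theorem) to conclude $r(P)\leq C(r,d)$. The caveats you raise are harmless: the recursive Gowers identity holds for arbitrary functions, not just homogeneous ones, and for any degenerate $t$ with $\deg \Delta_tP<d-1$ the bound $\|\psi(\Delta_tP)\|_{U_{d-1}}\geq b(r,d-1)$ holds trivially since that norm equals $1$.
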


\begin{proof} Let $b(r,d-1)$ be as in the Lemma \ref{previous}.
We have 
\[
\|\psi (P)\|^{2^d}_{U_d} = \mE_t \|\psi (P(x+t)-P(x))\|^{2^{d-1}}_{U_{d-1}}=\mE_t \|\psi (P_t)\|^{d-1}_{U_{d-1}} \geq b^{d-1}(r,d-1) >0
\]
for some constant since $P_t$ is of rank $<r$. 

Therefore (see \cite{BL})
 there exists $C(r,d)$ such  that  $r(P)\leq C(r,d)$.

\end{proof}
\section{Proof of the Main Theorem}

We start with the statement of the  following well known result. Let $S, T$ be finite sets and let 
 $(\star)$ be a system of equations of the form  $ A_s(x_ i)=0,s\in S,  B_t(x_i)\neq 0,t\in T$ where $A_s,B_t \in \mZ [x_1,\ldots,x_n]$. For any field $k$ we denote by $Z(k)\subset k^n$ the subset of solutions of the system $(\star)$.

\begin{claim}\label{claim} Assume that $Z (\mF _q)=\emp$ for all $q=p^l,p\geq d$. Then $Z(k)=\emp$ if $k$ is either a field of characteristic $\geq d$ or a field of characteristic zero.
\end{claim}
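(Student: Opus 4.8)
The plan is to reduce the claim to a statement about ideal membership over the prime field and then transfer from positive to zero characteristic. First I would clear the inequations by the Rabinowitsch trick: introduce a fresh variable $y_t$ for each $t\in T$ and replace $B_t(x)\neq 0$ by the equation $y_t B_t(x)-1=0$. Over every field this produces a pure system of equations $F_1=\cdots=F_m=0$ with $F_j\in\mZ[x_1,\dots,x_N]$ whose solution set, projected to the $x$-coordinates, recovers $Z$; so I may assume $(\star)$ has no inequations. Next, since $Z(k)\subseteq Z(\bar k)$, it suffices to treat algebraically closed $k$, and in fact it suffices to show that $1$ lies in the ideal $(F_1,\dots,F_m)$ over the prime field: if $\sum_j g_jF_j=1$ with $g_j$ defined over $\mF_p$ (resp.\ $\mQ$), then for any field $k$ of characteristic $p$ (resp.\ $0$) the relation persists and evaluating at a hypothetical point of $Z(k)$ gives $1=0$, so $Z(k)=\emptyset$.

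For characteristic $p\geq d$ this is immediate. Since $\overline{\mF_p}=\bigcup_l \mF_{p^l}$, the hypothesis gives $Z(\overline{\mF_p})=\emptyset$, i.e.\ the $F_j$ have no common zero over $\overline{\mF_p}$. By the weak Nullstellensatz applied over the (not necessarily closed) field $\mF_p$, this forces $1\in(F_1,\dots,F_m)\,\mF_p[x]$, which is exactly what is needed for every field of characteristic $p\geq d$.

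The characteristic-zero case is the crux, and here I would use the field-uniform \emph{effective} Nullstellensatz: there is a degree bound $D=D\bigl(N,\max_j\deg F_j\bigr)$, independent of the field, so that whenever $1\in(F_j)$ over a field one may find a certificate $\sum_j g_jF_j=1$ with $\deg g_j\leq D$. Fixing this $D$, the existence of such a certificate becomes a single inhomogeneous linear system $M\mathbf c=\mathbf e$ in the unknown coefficients $\mathbf c$ of the $g_j$, whose matrix $M$ and right-hand side $\mathbf e$ have \emph{integer} entries built from the coefficients of the $F_j$ and do not depend on the field; only their reduction does. By the previous paragraph this system is solvable over $\mF_p$ for every prime $p\geq d$, hence over infinitely many primes. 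A standard linear-algebra fact then completes the argument: an integer system solvable modulo infinitely many primes is solvable over $\mQ$. Indeed, were it unsolvable over $\mQ$, the Fredholm alternative yields $\phi\in\mZ^{\bullet}$ with $\phi^{T}M=0$ and $c:=\phi^{T}\mathbf e\neq 0$, whence the system is unsolvable modulo every $p\nmid c$, contradicting solvability for infinitely many $p$. Thus $1\in(F_j)\,\mQ[x]$, settling characteristic $0$.

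The one genuinely nontrivial ingredient—the main obstacle—is the field-independent degree bound, which is what lets the ideal-membership question collapse to a \emph{single} integer linear system and thereby makes the positive-to-zero characteristic transfer work; everything else is formal. Conceptually this is the classical Lefschetz/Ax transfer principle, and the same conclusion follows from the model-theoretic completeness of the theory of algebraically closed fields of each characteristic together with a compactness argument, which one could invoke in place of the explicit Nullstellensatz bound.
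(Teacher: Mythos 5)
Your proof is correct, but it reaches the conclusion by a genuinely different route than the paper. The paper gives two arguments: a model-theoretic one (completeness of the theory of algebraically closed fields of fixed characteristic handles characteristic $p$, and an ultraproduct $\prod \bar\mF_p/D$ together with \L o\'s's theorem handles characteristic $0$), and an ``elementary'' alternative that runs \emph{downward}: assuming a point of $Z(\bar\mQ)$ exists, it generates a number field, the point is scaled into the ring of integers $\mcO$ (this is where the paper needs the homogeneity assumption), a prime $p\geq d$ is chosen coprime to the norms $N_{K/\mQ}(B_t(x))$, and reduction modulo a maximal ideal over $p$ produces a point of $Z$ over a finite field, a contradiction. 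Your argument instead runs \emph{upward}: you convert inequations by the Rabinowitsch trick, descend Nullstellensatz certificates from $\bar\mF_p$ to $\mF_p$, and then transfer the family of mod-$p$ certificates to a single certificate over $\mQ$ via the field-independent effective Nullstellensatz degree bound plus the fact that an integer linear system solvable modulo infinitely many primes is solvable over $\mQ$. Both are sound; each buys something. Your version needs no homogeneity hypothesis, no algebraic number theory, and no model theory, and it handles the inequations uniformly from the start; its one heavy input is the effective Nullstellensatz (Hermann/Koll\'ar-type bounds, valid over any field), and you correctly isolate why that uniformity is indispensable --- without a $p$-independent degree bound, the certificate degrees over $\mF_p$ could grow with $p$ and the linear-algebra transfer (Fredholm alternative, bad primes dividing a fixed nonzero integer) would collapse. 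The paper's elementary argument, by contrast, uses only basic facts about rings of integers and norms, but must treat the inequations $B_t\neq 0$ by an ad hoc choice of prime and simplifies by assuming homogeneity (harmless in the application, where all the polynomials are homogeneous). Conceptually all three arguments are instances of the same Lefschetz-type transfer principle, as you note at the end.
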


\begin{proof}
It is sufficient to consider the case when the field $k$ is algebraically closed.
Consider first the case when $k$ is a field of positive characteristic $p$. By the assumption $Z(\bar \mF _p)=\emp$ where $\bar \mF _p$ is the algebraic closure of $\mF _p$. Then the result follows from Corollary 3.2.3 of \cite{M}. 

To deal with the case of fields of characteristic $0$ we choose a non-primitive ultrafilter  $D$ on the set of prime numbers and define $K=\prod \bar \mF _p/D$ 
(see the section $2.5$ in \cite{M}). Then $K$ is an algebraically closed field of characteristic $0$ and the Loe's theorem imply that $Z(K)=\emp$. As before  Corollary 3.2.3 of \cite{M} implies that  $Z(k)=\emp$ for all   algebraically closed fields of characteristic zero.

Since some people are not familiar with Model theory
we  indicate another way to prove that  the conditions of Claim \ref{claim} imply that $Z(\mC)=\emp$.
To simplify the arguments we assume that polynomials $A_s$ and $B_t$ are homogeneous.

As follows from the Hilbert's Nullstellensatz theorem it is sufficient to show that $Z(\bar \mQ) =\emp$. 
We show that an assumption 
$$\exists (x_1,\ldots,x_ n)\in Z(\bar \mQ)$$
 leads to a contradiction.

Let $K\subset \bar \mQ$ be the field generated by elements
 $x_i,1\leq i\leq n$ and $\mcO \subset K$ be the ring of integers. Since polynomials  $A_s$ and $B_t$ are homogeneous we have $(cx_1,\ldots,cx_ n)\in Z(\bar \mQ)$ for any $c\in K^\star$.
So we can assume that  $x_i\in \mcO,1\leq i\leq n$.

Let $N_{K/\mQ }:K^\star \to \mQ ^\star$ be the norm map.
Choose a prime number $p\geq d$ such that  
$N_{K/\mQ }(B_t(x_1,\ldots,x_n))$  are prime to $p$ for all $t\in T$.
Let $I\subset \mcO$ be any maximal ideal containing $p\mcO$.
Then $(\bar x_1,\ldots,\bar x_ n)\in (\mcO /I)^n$ is a point in $Z(\mcO /I)$. But by the assumption $Z(\mcO /I)=\emp$. This contradiction
proves the claim.
\end{proof}

Now we can prove the Theorem \ref{main}.
\begin{pf}
Fix $r$ and $d$ and for any $N\geq 1$ consider the algebraic variety $X_N$ of
 polynomials $P(x_1,\ldots,x_N)=\sum _{\bar i\in I}c_{\bar i}x^{\bar i}$ where where 
$$I=\{ i_1,\ldots,i_N\geq 0| \sum _ji_j=d\}, \ x^{\bar i}:=x_1^{i_1}...x_N^{i_N}.$$
 The condition  $r(P)>C(r,d)$ 
 can be written as a
 system 
$\{ A_s(c_{\bar i})=0\},1\leq s\leq A$ and  $\{ B_t(c_{\bar i})\neq 0\}, 1\leq t\leq B$, where $A_s, B_t$ are homogeneous polynomials in $c_{\bar i}$ with integer coefficients.  

On the other hand it follows from Theorem $3.2.2$ in \cite{M} (or from the theorem of Chevalley) 
that the  condition  
$$\forall t: r(P_t)\leq r$$ can  be written as a
 system equations of $c_{\bar i}$  the form 
$\{ C_u(c_{\bar i})=0\},1\leq u\leq C$
 and  $\{ D_v(c_{\bar i})\neq 0\}\}, 1\leq v\leq D$, where $C_u,D_v$ are homogeneous polynomials in $c_{\bar i}$ with integer coefficients.

 Let $W$ be the vector space with coordinates $c_{\bar i}$ and 
 $Z_N\subset W$ be the algebraic variety defined by the system of algebraic equalities
$$\{ A_s(c_{\bar i})= C_u(c_{\bar i})=0\},$$
and  inequalities
 $$\{ B_t(c_{\bar i})\neq 0,  D_v(c_{\bar i})\neq 0\}.$$
By definition,  for any field $k$ the points
 $c_{\bar i}\in Z_N(k)$ are in bijection with polynomials  $P=\sum _{\bar i}c_{\bar i}x^{\bar i}$ of degree and rank $>C(r,d)$ such that $r(\partial P/\partial t)\leq r$. 

So Theorem \ref{f} implies that $Z_N(\mF _q)=\emp$ for any $q=p^l,p>d$. 
Therefore by Claim \ref{claim} we have
$Z_N(k)=\emp$ for any $N$ and  any field $k$ of characteristic $0$ or of characteristic $>d$.

 Theorem \ref{main} is  proven.
\end{pf}

\end{document}